\documentclass[dvipdfmx,autodetect-engine]{journal}
\usepackage{tikz}
\usepackage{color}
\usepackage[all]{xy}
\usepackage{amsmath, amssymb}
\usepackage{amsfonts}
\usepackage{mathrsfs}
\usepackage{amsthm}
\usepackage{bm}
\usepackage{amscd}
\usepackage{tikz-cd}

\makeatletter
 \def\th@plain{\upshape}
 \makeatother

\newtheorem{theorem}{Theorem}[section]
\newtheorem{proposition}[theorem]{Proposition}
\newtheorem{lemma}[theorem]{Lemma}

\newtheorem{Example}[theorem]{Example}
\newtheorem{definition}[theorem]{Definition}
\newtheorem{rem}[theorem]{Remark}
\newtheorem{question}[theorem]{Question}

\title{Classification of Finite Subquandles in Spherical Quandles}
\author{
Kentaro Yonemura
\thanks{Osaka Central Advanced Mathematical Institute, Osaka Metropolitan University, Sugimoto, Sumiyoshi-ku, Osaka City 558-8585, Japan, E-mail: \texttt{yonemura-kentaro@sei.co.jp}}
\and
Naoki Nakamura
\thanks{Department of Mathematics, Kyushu University, 744 Motooka, Nishi-ku, Fukuoka 819–0395, Japan, E-mail: \texttt{nakamura.naoki.331@s.kyushu-u.ac.jp}}
}
\date{}

\usepackage{lineno}
\begin{document}

\maketitle
\begin{abstract}
 Every finite subquandle of a spherical quandle is accounted for by dihedral quandles and by quandles defined on root systems.
\end{abstract}

\noindent\textbf{MSC2020:} Primary 20N02; Secondary 57K12, 53C35.

\begin{keywords}
spherical quandle, finite subquandle, root system, dihedral quandle
\end{keywords}

\section{Introduction}
Quandles are algebraic structures introduced independently by Joyce \cite{joyce1982classifying} and Matveev \cite{matveev1982distributive}, and are known to provide powerful invariants in knot theory. Since their introduction, quandles have been studied extensively not only in knot theory but also in topology, algebra, and geometry. For general background on quandles and related topics, see, for example, \cite{kamada2017surface,nosaka2017books}.

Joyce \cite{joyce1982classifying} proposed the viewpoint that quandles may be regarded as algebraic analogues of symmetric spaces. Indeed, every Riemannian symmetric space carries a natural quandle structure defined by its point symmetries.  From this perspective, there have also been attempts to study quandles as generalizations or discretizations of symmetric spaces.

A notable example arising from this perspective is the study of spherical quandles. Azcan--Fenn \cite{AzcanFenn1994} considered the quandle structure on the \(n\)-dimensional unit sphere \(S^n\) induced by its structure as a Riemannian symmetric space. Moreover, they studied this quandle together with its natural extensions, which they called spherical quandles. Subsequently, \"{O}zdemir--Azcan \cite{Azcan2010finite} investigated finite subquandles of spherical quandles and provided a complete classification for \(S^1\) and \(S^2\). In particular, their classification for \(S^2\) essentially relies on the classification of finite subgroups of the real orthogonal group \(O(3)\). Although this approach is effective in low dimensions, its complexity increases rapidly when one attempts to extend it directly to higher dimensions. Consequently, the classification of finite subquandles of \(S^n\) for general \(n\) has remained open.

The aim of this paper is to resolve this problem in all dimensions. Our main result is the following theorem.

\begin{theorem}
\label{main_theorem}
Let \(X\) be a finite subquandle of the spherical quandle \(S^{n}_{\mathbb{R}}\). If \(\#X\) is even, then \(X\) is a root system in \(\mathbb{R}^{n+1}\) in the sense of Definition~\ref{definition_root_system}; if \(\#X\) is odd, then \(X\) is isomorphic to the dihedral quandle \(R_{\#X}\).
\end{theorem}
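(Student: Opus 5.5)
The spherical quandle operation on \(S^n\subset\mathbb{R}^{n+1}\) is \(x\triangleright y = 2\langle x,y\rangle x - y\), so \(s_x := (\,\cdot\,)\triangleright x\)... more precisely, \(y\mapsto x\triangleright y\) is minus the reflection \(r_x\) in the hyperplane \(x^\perp\): \(x\triangleright y = -r_x(y)\). I would start from this formula and use the parity of \(\#X\) to decide whether \(X\) is closed under negation.

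\textbf{Even case.} The plan is to show \(X=-X\). Then \(X\) is closed under the reflections \(r_x = -(x\triangleright\cdot)\), and the root-system axioms (finite, spanning a subspace, stable under its own reflections, with only \(\pm x\) as multiples of \(x\) since all elements are unit vectors) follow directly, giving Definition~\ref{definition_root_system} inside \(\mathrm{span}(X)\subset\mathbb{R}^{n+1}\).

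To prove \(X=-X\), consider the finite group \(G\subset O(n+1)\) generated by the symmetries \(y\mapsto x\triangleright y\), \(x\in X\). Each generator is an involution, namely minus a reflection, and \(X\) is \(G\)-stable. Since \(x\triangleright x=x\), the map \(s_x\) fixes \(x\) and acts on the rest of \(X\). Pairs \(\{y,-y\}\subset X\) with \(y\perp x\) are swapped, while \(y\mapsto 2\langle x,y\rangle x-y\) has fixed points in \(S^n\) only at \(\pm x\). Hence the fixed points of \(s_x\) on \(X\) are exactly \(X\cap\{x,-x\}\).

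Parity now decides the matter. Since \(s_x\) is an involution on \(X\), we have \(\#X\equiv \#\mathrm{Fix}(s_x)\pmod 2\). If \(\#X\) is even, then \(\mathrm{Fix}(s_x)=\{x,-x\}\), so \(-x\in X\) for every \(x\in X\). If \(\#X\) is odd, then \(\mathrm{Fix}(s_x)=\{x\}\), so \(-x\notin X\) for every \(x\).

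\textbf{Odd case.} Now \(X\cap(-X)=\emptyset\). The key step is to show that \(X\) lies in a 2-dimensional subspace. For \(x,y\in X\) with \(y\neq \pm x\), the subquandle generated by \(x,y\) lies on the great circle through them, and by the classification of finite subquandles of \(S^1\) (Özdemir--Azcan) it is a regular polygon. Odd polygons contain no antipodal points, so this subquandle is isomorphic to some odd dihedral quandle \(R_m\).

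To see that \(X\) is planar, take \(z\in X\) outside the plane \(P\) of a pair \(x,y\), and consider \(\langle x,y,z\rangle\). Then \(W=\{\pm w: w\in X\}\) is a root system, being closed under reflections because \(r_w=-s_w\). Since \(X\) is closed under \(s_w\), it is a \(W\)-stable "half" of \(W\) that meets each \(\pm\) pair exactly once. For an irreducible component of \(W\) of rank \(\ge2\), I would show that no such selection exists unless the rank is 2 with odd dihedral type. The \(W\)-action (generated by \(-r_w\)) would permute the selection, and the aim is a contradiction via a root string or a parity count of a rank-2 subsystem of type \(A_1\times A_1\) or \(B_2\).

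It remains to rule out several orthogonal components. If \(x\perp y\) with both in \(X\), then \(s_x(y)=-y\in X\), contradicting \(X\cap(-X)=\emptyset\). So \(X\) cannot contain orthogonal elements, and therefore lies in a single irreducible component. That component is of type \(I_2(m)\) with \(m\) odd, so \(X\) is a regular \(m\)-gon in a plane. The quandle operation there is reflection through a vertex, which gives \(X\cong R_m\) with \(m=\#X\).

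\textbf{Main obstacle.} I expect the hardest step to be ruling out irreducible rank \(\ge3\) root systems such as \(A_n\), \(D_n\) and \(E_8\) containing a negation-free reflection-stable half. My first attempt will use the fact that any irreducible root system of rank \(\ge3\) contains orthogonal roots \(\alpha\perp\beta\). The selection contains \(\pm\alpha\) and \(\pm\beta\) with some signs, and then \(s_\alpha\) sends the chosen \(\beta\) to its negative, which gives the contradiction.
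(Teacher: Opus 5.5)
Your argument is correct, but it takes a different route from the paper's in both halves, and one step in the odd case is asserted rather than proved.

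\textbf{Antipodal dichotomy.} You observe that $s_x$ is an involution of $X$ whose fixed points are exactly $X\cap\{\pm x\}$, so $\#X\equiv\#(X\cap\{\pm x\})\pmod 2$. This gives $X=-X$ when $\#X$ is even and $X\cap(-X)=\emptyset$ when $\#X$ is odd, in one stroke. The paper instead shows that a single antipodal pair forces $X=-X$, by applying the $S^1$ classification (Proposition~\ref{prop_Ozdemir-Azcan}) in the planes through $x$ (Proposition~\ref{prop_antipodalpoint_subquandle}). There the parity of $\#X$ only emerges at the end, from the structure of each case. Your version is the more elementary one here.

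\textbf{Odd case.} The paper fixes $x$ and splits $X\setminus\{x\}$ along the planes in $\mathcal{P}_x$. It then applies $\Sigma_X=0$ (Proposition~\ref{prop_sum_subquandle_elements=0}) to $X$ and to every slice $X\cap\Pi$, obtaining $(1-\#\mathcal{P}_x)x=0$ and hence planarity, using nothing beyond the circle case. You instead use $s_x(y)=-y$ for $y\perp x$ to see that $W=X\sqcup(-X)$ is a root system with no orthogonal pair. You then need the fact that every finite root system of rank $\ge 3$ has two orthogonal roots.

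That fact is true, but you have only asserted it. It must hold for root systems in the generality of Definition~\ref{definition_root_system}, which includes non-crystallographic types such as $H_3$ and $H_4$, not just $A_n$, $D_n$, $E_8$. A clean proof uses a simple system: distinct simple unit roots satisfy $\langle\alpha,\beta\rangle=-\cos(\pi/m_{\alpha\beta})\le-\tfrac12$ unless they are orthogonal. So three pairwise non-orthogonal simple roots would give $\|\alpha+\beta+\gamma\|^2\le 0$, contradicting linear independence.

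Applied to $W$ directly, this makes your detour through irreducible components, root strings and $A_1\times A_1$ or $B_2$ counts unnecessary. The rank-one case simply gives $\#X=1$. The cost is importing the structure theory of finite reflection groups, which the paper deliberately avoids. Your parity lemma combined with the paper's sum argument would give the most elementary proof of all.

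Minor point: your formula for $\triangleright$ has its arguments swapped relative to the paper's $x\triangleright y=2\langle x,y\rangle y-x$. This is harmless, since $s_x$ is the same map.
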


This theorem completely classifies the finite subquandles of spherical quandles: those of even cardinality are precisely finite root systems consisting of unit vectors, whereas those of odd cardinality are precisely dihedral quandles. In particular, finite subquandles of spherical quandles exhibit a striking rigidity: their structure is completely determined by the parity of their cardinality. Note that every finite root system consisting of unit vectors gives rise to a finite subquandle of a spherical quandle. Therefore, every isomorphism class of finite root systems arises in this way. To the best of our knowledge, Theorem~\ref{main_theorem} gives the first concise quandle-theoretic characterization of finite root systems. Our proof does not rely on the classification of finite subgroups of \(O(n+1)\), and thus provides a uniform treatment in all dimensions. The proof is based on the observation that the presence of one antipodal pair forces antipodal closure, whereas the absence of antipodal pairs forces the subquandle to lie in a \(2\)-dimensional subspace, reducing the problem to the classification of finite subquandles of the spherical quandle on \(S^{1}\).

The problem studied in \cite{Azcan2010finite} and in this paper is a special case of the following, more general question.

\begin{question}
\label{question_finite_subquandle_Riemann_symmetric_spaces}
Let \(X\) be a compact Riemannian symmetric space equipped with the natural quandle structure defined by its point symmetries. Classify the finite subquandles of \(X\).
\end{question}
Although Question~\ref{question_finite_subquandle_Riemann_symmetric_spaces} remains open in general, Theorem~\ref{main_theorem} provides the first answer in the case of spheres. In the case of spherical quandles, the behavior of antipodal points plays an essential role. This suggests that the notion of antipodal sets, introduced by Chen--Nagano~\cite{ChenNagano1988riemannian}, may be useful in studying Question~\ref{question_finite_subquandle_Riemann_symmetric_spaces}. More generally, one can formulate an analogous question for homogeneous quandles. For background on homogeneous quandles, see \cite{IshiharaTamaru2016flat}.

We conclude this introduction by describing the organization of the paper. In Section~\ref{section_preliminaries}, we recall the basic notions of quandle theory needed in this paper. In Section~\ref{section_One-dimensional_spherical_quandle}, we review the properties of the spherical quandle on \(S^{1}\) relevant to this paper and present \"{O}zdemir--Azcan's classification of its finite subquandles. In Section~\ref{section_Root_systems_as_quandles}, we introduce and study a quandle structure on root systems. Finally, in Section~\ref{section_Classification_of_finite_subquandles_of_spherical_quandles}, we prove Theorem~\ref{main_theorem}.

\section{Preliminaries}
\label{section_preliminaries}
In this section, we review the terminology on quandles used in this paper. See \cite{kamada2017surface,nosaka2017books} for more details.

A \emph{quandle} is a pair \((X,\triangleright)\) consisting of a nonempty set \(X\) and a binary operation \(\triangleright : X \times X \to X\) satisfying the following three conditions:
\begin{enumerate}
  \item[Q1] For every \(x \in X\), one has \(x \triangleright x = x\).
  \item[Q2] For each \(y \in X\), the map \(s_y : X \to X\) defined by \(x \mapsto x \triangleright y\) is a bijection.
  \item[Q3] For all \(x,y,z \in X\), one has \((x \triangleright y)\triangleright z = (x \triangleright z)\triangleright (y \triangleright z)\).
\end{enumerate}
The conditions Q1, Q2, and Q3 are consistent with Reidemeister moves, operations for knot diagrams in knot theory, I, II and III respectively. See Figure \ref{pic_def_quandle_operation}.

\begin{figure}[hbtp]
\centering
\includegraphics[width=12cm]{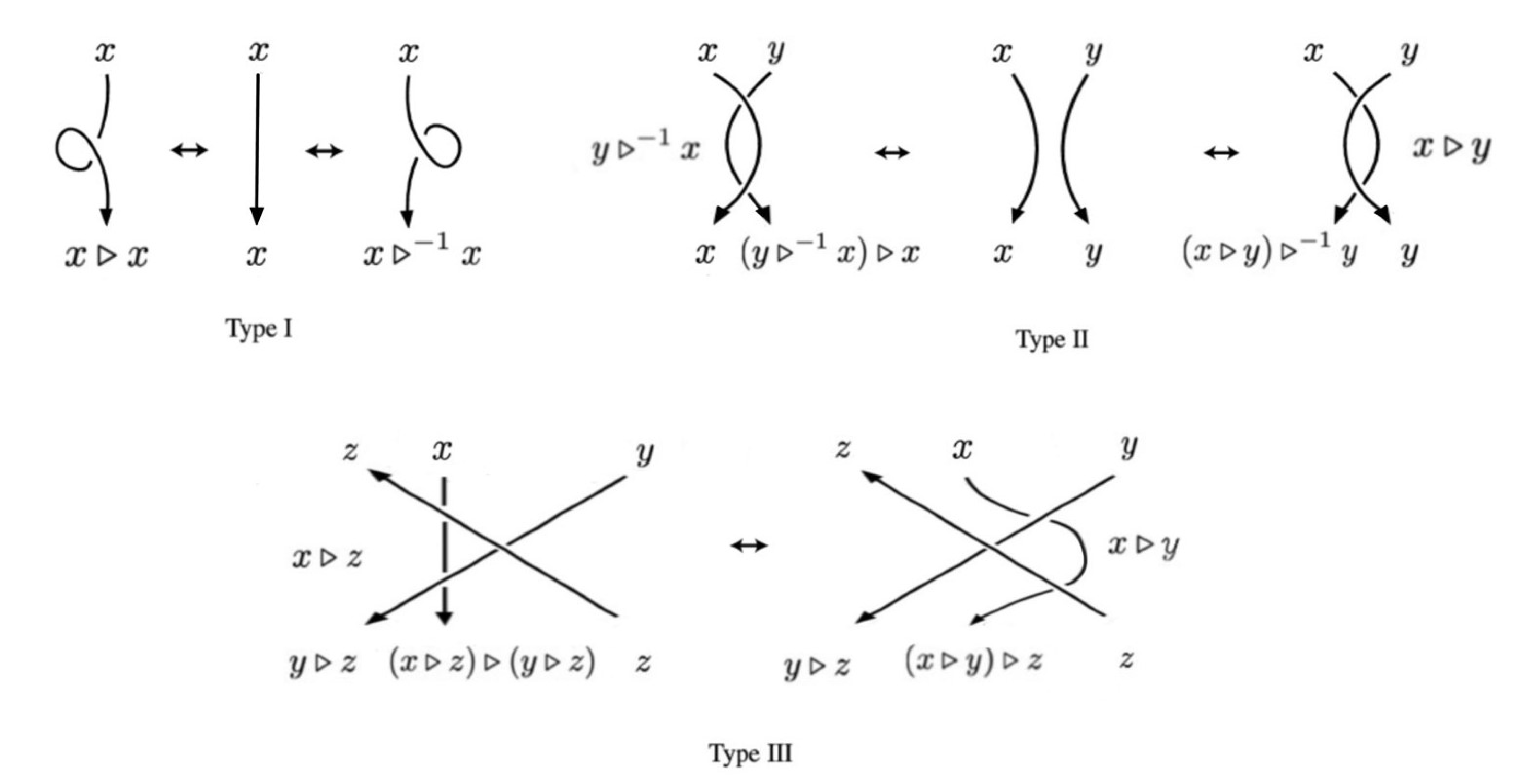}
\caption{Geometric interpretation of Q1, Q2 and Q3.}
\label{pic_def_quandle_operation}

\end{figure}

We now introduce the quandles considered in this paper.

\begin{Example}[Trivial quandle]
For any nonempty set \(X\), define a binary operation \(\triangleright : X \times X \to X\) by
\[
x \triangleright y := x
\qquad
(x,y \in X).
\]
Then \((X,\triangleright)\) is a quandle. This is called the \emph{trivial quandle}.
\end{Example}

\begin{Example}[Dihedral quandle~\cite{takasaki1943abstraction}]
Let \(M=\mathbb{Z}/n\mathbb{Z}\) be the cyclic group of order \(n\). Define a binary operation \(\triangleright : M \times M \to M\) by
\[
x \triangleright y := 2y-x
\qquad
(x,y \in M).
\]
Then \((M,\triangleright)\) is a quandle. This is called the \emph{dihedral quandle} and is denoted by \(R_n\).
\end{Example}

\begin{Example}[Spherical quandle~\cite{AzcanFenn1994}]
Consider the unit sphere
\[
S^n
=
\left\{
(x_1,\dots,x_{n+1}) \in \mathbb{R}^{n+1}
\;\middle|\;
x_1^2+\cdots+x_{n+1}^2=1
\right\}.
\]
Let \(\langle -,-\rangle\) denote the Euclidean inner product on \(\mathbb{R}^{n+1}\). Define a binary operation \(\triangleright : S^n \times S^n \to S^n\) by
\[
x \triangleright y := 2\langle x,y\rangle y-x
\qquad
(x,y \in S^n).
\]
Then \((S^n,\triangleright)\) is a quandle. This is called the \emph{spherical quandle} and is denoted by \(S^n_{\mathbb{R}}\).
\end{Example}

The spherical quandle is an example of the quandle structure on a Riemannian symmetric space defined by point symmetries, as observed by Joyce. In this paper, we shall occasionally use without further comment both the fact that \(S^n\) is a Riemannian symmetric space and the fact that \(S^n\) is naturally embedded in the Euclidean space \(\mathbb{R}^{n+1}\).

Let \(X\) and \(Y\) be quandles. A map \(f:X\to Y\) is called a \emph{quandle homomorphism} if \(
f(x\triangleright y)=f(x)\triangleright f(y)
\) for \(x,y\in X\). A bijective quandle homomorphism is called a \emph{quandle isomorphism}. By definition, every quandle gives rise to distinguished quandle automorphisms: for each \(y\in X\), one has a map\(s_y:X\to X\). Condition (Q3) implies that \(s_y\) is a quandle homomorphism, and by (Q2), it is bijective. Hence \(s_y\) is a quandle automorphism. This automorphism \(s_y\) is called the \emph{inner automorphism} associated with \(y\). We write \(\operatorname{Inn}(X)\) for the subgroup of the automorphism group of \(X\) generated by these inner automorphisms, and call it the \emph{inner automorphism group} of \(X\). A \emph{subquandle} of a quandle \((X,\triangleright)\) is a subset of \(X\) on which the operation \(\triangleright\) induces a quandle structure.

In this paper, we write \(\#S\) for the cardinality of a set \(S\). 

\section{One-dimensional spherical quandle}
\label{section_One-dimensional_spherical_quandle}
In this section, we review the one-dimensional spherical quandle \(S^1_{\mathbb{R}}\) and present Özdemir and Azcan's results on the finite subquandles of \(S^1_{\mathbb{R}}\).

The circle \(S^{1}\) admits the following parametrization:
\[
S^{1}=\{(\cos\theta,\sin\theta)\in\mathbb{R}^{2}\;:\;\theta\in\mathbb{R}\}.
\]
When \(S^{1}\) is endowed with the structure of a spherical quandle, its quandle operation is given as follows, and the corresponding inner automorphism is realized by reflection.
\begin{equation}
\label{prop_operator_S^1spherical_quandle}
(\cos{\alpha},\sin{\alpha})
\triangleright
(\cos{\beta},\sin{\beta})
=
(\cos{(2\beta-\alpha)},\sin{(2\beta-\alpha)})
=
(\cos{\alpha},\sin{\alpha})
\begin{pmatrix}
\cos{2\beta} & \sin{2\beta}\\
\sin{2\beta} & -\cos{2\beta}
\end{pmatrix}
.
\end{equation}
It follows that the inner automorphism group \(\operatorname{Inn}S^{1}_{\mathbb{R}}\) of the spherical quandle \(S^{1}_{\mathbb{R}}\) is the orthogonal group \(O(2)\).

\begin{proposition}
\label{prop_lotation_auto_S^1_R}
For each \(\theta \in \mathbb{R}\), the isometry $C_{\theta}: S^{1} \to S^{1}$ defined by
\[
\qquad
C_{\theta}(\bm{x}):= 
\bm{x}
\begin{pmatrix}
\cos\theta & \sin\theta\\
-\sin\theta & \cos\theta
\end{pmatrix}
\quad
(\bm{x}\in S^{1})
\]
is a quandle automorphism of \(S^{1}_{\mathbb{R}}\).
\end{proposition}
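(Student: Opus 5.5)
We must show that \(C_\theta\) is a bijection of \(S^1\) onto itself and that it commutes with the operation \(\triangleright\). Bijectivity comes first. Right multiplication by the matrix
\[
R_\theta=\begin{pmatrix}\cos\theta & \sin\theta\\ -\sin\theta & \cos\theta\end{pmatrix}
\]
is a linear map of \(\mathbb{R}^2\). Since \(R_\theta R_\theta^{T}=I\), it preserves the Euclidean inner product, so it maps \(S^1\) into \(S^1\). Its inverse is right multiplication by \(R_{-\theta}=R_\theta^{T}\). Hence \(C_\theta\) is a bijection of \(S^1\).

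For the homomorphism property, I would use the inner-product form of the spherical operation rather than the angle formula (\ref{prop_operator_S^1spherical_quandle}). Let \(\bm{x},\bm{y}\in S^1\). By linearity of right multiplication,
\[
C_\theta(\bm{x}\triangleright\bm{y})=\bigl(2\langle\bm{x},\bm{y}\rangle\bm{y}-\bm{x}\bigr)R_\theta=2\langle\bm{x},\bm{y}\rangle\,\bm{y}R_\theta-\bm{x}R_\theta .
\]
By orthogonality, \(\langle\bm{x}R_\theta,\bm{y}R_\theta\rangle=\bm{x}R_\theta R_\theta^{T}\bm{y}^{T}=\langle\bm{x},\bm{y}\rangle\). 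Therefore the right-hand side equals
\[
2\langle C_\theta(\bm{x}),C_\theta(\bm{y})\rangle\,C_\theta(\bm{y})-C_\theta(\bm{x})=C_\theta(\bm{x})\triangleright C_\theta(\bm{y}).
\]
Together with bijectivity, this shows \(C_\theta\) is a quandle automorphism.

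As a cross-check in angle coordinates: \(C_\theta\) sends \((\cos\alpha,\sin\alpha)\) to \((\cos(\alpha-\theta),\sin(\alpha-\theta))\). Then by (\ref{prop_operator_S^1spherical_quandle}) both sides give the angle \(2\beta-\alpha-\theta\), since \(2(\beta-\theta)-(\alpha-\theta)=2\beta-\alpha-\theta\).

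There is no real obstacle here. The only point needing care is the row-vector convention: the proof must check that \(R_\theta\) is orthogonal as a right multiplier, which is simply \(R_\theta R_\theta^{T}=I\). The same argument shows more generally that every element of \(O(n+1)\) acts as an automorphism of \(S^n_{\mathbb{R}}\).
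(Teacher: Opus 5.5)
Your proof is correct, but it takes a different route from the paper. The paper does not verify the operation directly. It uses the matrix identity showing that a product of two reflection matrices is a rotation matrix, and with \eqref{prop_operator_S^1spherical_quandle} writes \(C_\theta=s_\beta\circ s_\alpha\), where \(s_\alpha,s_\beta\) are the symmetries at \((\cos\alpha,\sin\alpha)\) and \((\cos\beta,\sin\beta)\) and \(\theta=2(\beta-\alpha)\). Since each \(s_y\) is an automorphism by Q2 and Q3, so is \(C_\theta\).

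You instead check that any linear isometry preserves \(2\langle\bm{x},\bm{y}\rangle\bm{y}-\bm{x}\). Your route is dimension-free and needs no trigonometric identity. It also shows at once that all of \(O(n+1)\) acts by automorphisms of \(S^n_{\mathbb{R}}\). The paper's route gives a sharper conclusion for \(S^1\): \(C_\theta\) is an \emph{inner} automorphism. This is what lies behind the claim \(\operatorname{Inn}S^1_{\mathbb{R}}=O(2)\) made just before the proposition. Your argument does not give innerness, though the proposition as stated does not require it.

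One small slip in your cross-check: with the row-vector convention, \(C_\theta\) sends \((\cos\alpha,\sin\alpha)\) to \((\cos(\alpha+\theta),\sin(\alpha+\theta))\), not to angle \(\alpha-\theta\). The first coordinate is \(\cos\alpha\cos\theta-\sin\alpha\sin\theta\). This does not hurt the check, since \(2(\beta+\theta)-(\alpha+\theta)=(2\beta-\alpha)+\theta\) works equally well. The sign does matter later, where \(C_\alpha(\mathcal{R}_n)\) is used as the rotation of \(\mathcal{R}_n\) by \(+\alpha\).
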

\begin{proof}
The composition of two reflections is a rotation. Indeed, for any \(\alpha,\beta\in\mathbb{R}\), we have
\[
\begin{pmatrix}
\cos\alpha & \sin\alpha\\
\sin\alpha & -\cos\alpha
\end{pmatrix}
\begin{pmatrix}
\cos\beta & \sin\beta\\
\sin\beta & -\cos\beta
\end{pmatrix}
=
\begin{pmatrix}
\cos(\beta-\alpha) & \sin(\beta-\alpha)\\
-\sin(\beta-\alpha) & \cos(\beta-\alpha)
\end{pmatrix}.
\]
Therefore, by \eqref{prop_operator_S^1spherical_quandle}, if we choose \(\alpha,\beta\) so that \(\theta=2(\beta-\alpha)\), then \(C_{\theta}=s_{\beta}\circ s_{\alpha}\).
\end{proof}
We shall show in Proposition \ref{prop_Ozdemir-Azcan} that the spherical quandle \(S^{1}_{\mathbb{R}}\) has only very restricted subquandles. Although this is essentially contained in the discussion at the beginning of Section 3 of \cite{Azcan2010finite}, we include a proof here for the reader's convenience and for the sake of completeness, since the result plays an important role in what follows. We begin with some preparation. Consider the subset
\begin{equation}
\label{prop_R_ncong_mathcalR_n}
\mathcal{R}_{n}
=
\left\{
\left(\cos\frac{2\pi k}{n},\sin\frac{2\pi k}{n}\right)
\in S^{1}
\;:\;
k=0,1,2,\ldots,n-1
\right\}
\end{equation}
of \(S^{1}\). It follows from \eqref{prop_operator_S^1spherical_quandle} that \(\mathcal{R}_{n}\) is a subquandle of \(S^{1}_{\mathbb{R}}\). Define a map \(f \colon R_{n} \to \mathcal{R}_{n}\) by
\[
f(k+n\mathbb{Z})
:=
\left(\cos\frac{2\pi k}{n},\sin\frac{2\pi k}{n}\right)
\qquad
(k\in\mathbb{Z}).
\]
Then, by the periodicity of the trigonometric functions, \(f\) is a well-defined bijection. Moreover, it follows from \eqref{prop_operator_S^1spherical_quandle} that \(f\) is a quandle isomorphism.

For \(\alpha,\beta \in \mathbb{R}\), define
\[
M(\alpha,\beta)
:=
\left\{
\bigl(\cos(k(\beta-\alpha)+\alpha),\ \sin(k(\beta-\alpha)+\alpha)\bigr)\in S^1
\;\middle|\;
k\in\mathbb{Z}
\right\}.
\]
\begin{lemma}
\label{lemma_M(a,b)_subquandle}
Let \(X\) be a subquandle of \(S^{1}_{\mathbb{R}}\), and let \(\alpha,\beta \in \mathbb{R}\). If \((\cos\alpha,\sin\alpha)\) and \((\cos\beta,\sin\beta)\) belong to \(X\), then \(M(\alpha,\beta)\) is a subquandle of \(X\).
\end{lemma}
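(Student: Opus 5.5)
Write \(p_k:=(\cos(k\delta+\alpha),\sin(k\delta+\alpha))\) for \(k\in\mathbb{Z}\), where \(\delta:=\beta-\alpha\). Then \(M(\alpha,\beta)=\{p_k : k\in\mathbb{Z}\}\), with \(p_0=(\cos\alpha,\sin\alpha)\) and \(p_1=(\cos\beta,\sin\beta)\). The claim has two parts: \(M(\alpha,\beta)\subseteq X\), and \(M(\alpha,\beta)\) is closed under \(\triangleright\) and satisfies Q2 there (Q1 and Q3 are inherited from \(S^1_{\mathbb{R}}\)). Everything rests on one identity, which follows at once from \eqref{prop_operator_S^1spherical_quandle}:
\[
p_j\triangleright p_k=\bigl(\cos(2(k\delta+\alpha)-(j\delta+\alpha)),\ \sin(2(k\delta+\alpha)-(j\delta+\alpha))\bigr)=p_{2k-j}.
\]

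\emph{Closure and Q2 on \(M(\alpha,\beta)\).} Since \(2k-j\in\mathbb{Z}\), the identity shows \(M(\alpha,\beta)\) is closed under \(\triangleright\). For Q2, fix \(p_k\). The map \(p_j\mapsto p_{2k-j}\) is an involution of \(M(\alpha,\beta)\), hence a bijection. Alternatively, \(s_{p_k}\) is a reflection of \(S^1\), so it is injective, and an injective map of a set onto itself with this involutive form is bijective. This handles possible coincidences \(p_j=p_{j'}\) cleanly, because \(s_{p_k}\) is a well-defined map on \(S^1\); so \(M(\alpha,\beta)\) is a subquandle of \(S^1_{\mathbb{R}}\).

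\emph{Containment \(M(\alpha,\beta)\subseteq X\).} Show \(p_k\in X\) for all \(k\) by two-sided induction. The base cases \(p_0,p_1\in X\) are the hypothesis. Upward: if \(p_{k-1},p_k\in X\), then \(p_{k+1}=p_{k-1}\triangleright p_k\in X\), since \(X\) is closed under \(\triangleright\). Downward: if \(p_k,p_{k+1}\in X\), then \(p_{k-1}=p_{k+1}\triangleright p_k\in X\).

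The argument is entirely routine. The only point needing care is recording the identity \(p_j\triangleright p_k=p_{2k-j}\) correctly from \eqref{prop_operator_S^1spherical_quandle}, and noting that Q2 is automatic because each \(s_{p_k}\) is the restriction of a reflection of \(S^1\) that preserves \(M(\alpha,\beta)\) and is its own inverse.
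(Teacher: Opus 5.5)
Your proposal is correct and follows essentially the same approach as the paper: a two-sided induction from $p_0,p_1$ via $p_{k\pm1}=p_{k\mp1}\triangleright p_k$ gives containment in $X$, and the operation formula gives closure. Your explicit identity $p_j\triangleright p_k=p_{2k-j}$ and your check of Q2 through the involution $p_j\mapsto p_{2k-j}$ (which matters when $M(\alpha,\beta)$ is infinite) spell out details the paper leaves implicit.
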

\begin{proof}
Define a sequence \(\{a_k\}_{k\in\mathbb{Z}}\) recursively by
\[
a_k=
\begin{cases}
(\cos\alpha,\sin\alpha) & (k=0),\\
(\cos\beta,\sin\beta) & (k=1),\\
a_{k-2}\triangleright a_{k-1} & (k\ge 2),\\
a_{k+2}\triangleright a_{k+1} & (k\le -1).
\end{cases}
\]
A straightforward induction shows that, for every \(k\in\mathbb{Z}\),
\[
a_k=
\bigl(\cos(k(\beta-\alpha)+\alpha),\sin(k(\beta-\alpha)+\alpha)\bigr).
\]
Hence \(a_k\in X\) for all \(k\in\mathbb{Z}\). Since 
\(
M(\alpha,\beta)=\{a_k : k\in\mathbb{Z}\},
\) 
it follows from \eqref{prop_operator_S^1spherical_quandle} that \(M(\alpha,\beta)\) is a subquandle of \(X\).
\end{proof}

\begin{lemma}
\label{lemma_Nakamura_Naoki}
Let $\alpha,\beta\in\mathbb{R}$, and let $n$ be a positive integer. If $M(\alpha,\beta)$ is finite, then $\beta-\alpha\in 2\pi\mathbb{Q}$. In particular, if $\#M(\alpha,\beta)=n$, then
\[
M(\alpha,\beta)
=
\left\{
\left(\cos(\alpha+\tfrac{2\pi k}{n}),\sin(\alpha+\tfrac{2\pi k}{n})\right)
\ :\ 
k=0,1,2,\dots,n-1
\right\}
= C_{\alpha}(\mathcal{R}_{n}).
\]
\end{lemma}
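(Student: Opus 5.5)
Set \(\delta:=\beta-\alpha\). The points of \(M(\alpha,\beta)\) are \(e(\alpha+k\delta)\), \(k\in\mathbb{Z}\), where \(e(\theta):=(\cos\theta,\sin\theta)\). Since \(e(\theta)=e(\theta')\) if and only if \(\theta-\theta'\in 2\pi\mathbb{Z}\), the plan is to turn finiteness into a statement about \(\delta\) modulo \(2\pi\) and then identify the set explicitly.

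\emph{Rationality.} Suppose \(M(\alpha,\beta)\) is finite. Then the map \(k\mapsto e(\alpha+k\delta)\) from \(\mathbb{Z}\) to a finite set is not injective, so there are integers \(k\neq l\) with \(e(\alpha+k\delta)=e(\alpha+l\delta)\). This gives \((k-l)\delta\in 2\pi\mathbb{Z}\), hence \(\delta\in 2\pi\mathbb{Q}\), which is the first claim.

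\emph{Identification.} Let \(m\ge1\) be the least positive integer with \(m\delta\in2\pi\mathbb{Z}\), and write \(m\delta=2\pi p\).
\begin{enumerate}
\item Minimality of \(m\) forces \(\gcd(p,m)=1\). Indeed, if \(d=\gcd(p,m)>1\), then \((m/d)\delta=2\pi(p/d)\in2\pi\mathbb{Z}\), contradicting minimality.
\item The points \(e(\alpha+k\delta)\) for \(k=0,\dots,m-1\) are pairwise distinct, again by minimality of \(m\). Every \(k\in\mathbb{Z}\) reduces to this range modulo \(m\), so \(\#M(\alpha,\beta)=m\).
\item Hence \(\#M(\alpha,\beta)=n\) gives \(m=n\), and \(\alpha+k\delta=\alpha+2\pi kp/n\). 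Since \(p\) is invertible modulo \(n\), the residues \(kp \bmod n\) run over all of \(\mathbb{Z}/n\mathbb{Z}\) as \(k\) does.
\end{enumerate}
It follows that \(M(\alpha,\beta)=\{e(\alpha+2\pi j/n): j=0,\dots,n-1\}\), which is the displayed set.

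Finally, this set equals \(C_\alpha(\mathcal{R}_n)\). Right-multiplying the row vector \(e(\phi)\) by the rotation matrix of Proposition~\ref{prop_lotation_auto_S^1_R} gives \(e(\phi+\alpha)\), by the angle-addition formulas, so \(C_\alpha(e(2\pi j/n))=e(\alpha+2\pi j/n)\). The one step that needs real care is the coprimality of \(p\) and \(n\), established in item 1 from the minimality of \(m\); the rest is bookkeeping modulo \(2\pi\).
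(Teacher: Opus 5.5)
Your proof is correct and takes essentially the paper's approach: you get rationality from a coincidence $e(\alpha+k\delta)=e(\alpha+l\delta)$ with $k\neq l$, write $\delta=2\pi p/n$ in lowest terms, and use that $p$ is invertible modulo $n$ (the paper writes this as a B\'ezout identity $ap+bq=1$) to conclude that the orbit is exactly $C_\alpha(\mathcal{R}_n)$. Choosing the minimal period $m$ is only a convenient way to obtain the reduced denominator, and it has the small advantage of making the count $\#M(\alpha,\beta)=m$ explicit, which the paper leaves implicit.
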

\begin{proof}
Since $M(\alpha,\beta)$ is finite, there exist distinct integers $l$ and $m$ such that
\[
(\cos(l(\beta-\alpha)+\alpha),\sin(l(\beta-\alpha)+\alpha))
=
(\cos(m(\beta-\alpha)+\alpha),\sin(m(\beta-\alpha)+\alpha)).
\]
Hence,
\[
(l(\beta-\alpha)+\alpha)-(m(\beta-\alpha)+\alpha)
=
(l-m)(\beta-\alpha)
\in 2\pi\mathbb{Z}.
\]
Therefore, $\beta-\alpha\in 2\pi\mathbb{Q}$.

Write 
\(
\beta-\alpha = \frac{2\pi p}{q},
\) 
where $p$ and $q$ are relatively prime integers. Then there exist integers $a$ and $b$ such that \(ap+bq=1.\) Therefore, for any integer $k$,
\begin{equation*}
\frac{2\pi k}{q}
=
2\pi ka\frac{p}{q}+2\pi bk
=
ka(\beta-\alpha)+2\pi bk.
\end{equation*}
It follows that
\[
M(\alpha,\beta)
=
\left\{
\left(
\cos\left(\alpha+\tfrac{2\pi k}{q}\right),\sin\left(\alpha+\tfrac{2\pi k}{q}\right)
\right)
\ :\ 
k=0,1,2,\dots,q-1
\right\}.
\]
Thus, $M(\alpha,\beta)=C_{\alpha}(\mathcal{R}_{q})$ and $\#M(\alpha,\beta)=q$, and the assertion follows.
\end{proof}

\begin{proposition}[\cite{Azcan2010finite}]
\label{prop_Ozdemir-Azcan}
Let $X$ be a finite subquandle of the spherical quandle $S^{1}_{\mathbb{R}}$, and set $n=\#X$. Then there exists $\theta\in\mathbb{R}$ such that $X=C_{\theta}(\mathcal{R}_{n}).$ In particular, every finite subquandle of the spherical quandle $S^{1}_{\mathbb{R}}$ is isomorphic to a dihedral quandle.
\end{proposition}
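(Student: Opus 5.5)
The plan is to reduce to Lemma~\ref{lemma_Nakamura_Naoki} by finding a single pair of points of \(X\) that generates all of \(X\). Any such pair produces a set \(M(\alpha,\beta)\) of the form \(C_{\alpha}(\mathcal{R}_{q})\).

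\emph{Setup.} Write each element of \(X\) as \((\cos\theta,\sin\theta)\). The case \(\#X=1\) is trivial: \(X=C_{\theta}(\mathcal{R}_{1})\). So assume \(n=\#X\ge 2\).

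\emph{Step 1: choose a closest pair.} Choose two distinct points \((\cos\alpha,\sin\alpha)\) and \((\cos\beta,\sin\beta)\) in \(X\) whose angular distance is minimal among all pairs of distinct points. By finiteness such a pair exists, and we may normalize \(0<\beta-\alpha\le\pi\).

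\emph{Step 2: the pair generates a subquandle.} By Lemma~\ref{lemma_M(a,b)_subquandle}, \(M(\alpha,\beta)\subseteq X\). Since \(X\) is finite, so is \(M(\alpha,\beta)\). Hence, by Lemma~\ref{lemma_Nakamura_Naoki}, \(M(\alpha,\beta)=C_{\alpha}(\mathcal{R}_{q})\) with \(q=\#M(\alpha,\beta)\).

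\emph{Step 3: \(q\) is forced.} The set \(C_{\alpha}(\mathcal{R}_{q})\) contains points at angular distance \(2\pi/q\). By minimality, \(2\pi/q\ge\beta-\alpha\). On the other hand, \(\beta-\alpha\) is a multiple of \(2\pi/q\), because \(\beta\) itself lies in \(M(\alpha,\beta)\). Therefore \(\beta-\alpha=2\pi/q\).

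\emph{Step 4: show \(X=M(\alpha,\beta)\).} This is the main step. Suppose some \(x=(\cos\gamma,\sin\gamma)\in X\) lies outside \(M(\alpha,\beta)\). The points of \(M(\alpha,\beta)\) cut the circle into arcs of length \(2\pi/q\), so \(x\) lies strictly inside one such arc, between two consecutive points \(\alpha+2\pi k/q\) and \(\alpha+2\pi(k+1)/q\). The angular distance from \(x\) to the nearer endpoint is then strictly less than \(2\pi/q\). That nearer endpoint is in \(X\), so this contradicts the minimality in Step 1. Hence \(X=M(\alpha,\beta)=C_{\alpha}(\mathcal{R}_{q})\) and \(q=n\).

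Note that Step 4 needs no quandle operation beyond minimality. Alternatively, one could apply Lemma~\ref{lemma_M(a,b)_subquandle} to the pair \(\alpha,\gamma\), which gives a closer pair directly.

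\emph{Conclusion.} Taking \(\theta=\alpha\) gives \(X=C_{\theta}(\mathcal{R}_{n})\). Since \(C_{\theta}\) is a quandle automorphism by Proposition~\ref{prop_lotation_auto_S^1_R}, and the map \(f\) identifies \(R_n\) with \(\mathcal{R}_n\), it follows that \(X\cong\mathcal{R}_{n}\cong R_{n}\).
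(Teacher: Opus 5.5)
Your proof is correct, but it takes a different route from the paper's.

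\textbf{The paper's route.} The paper takes $\theta$ and $\phi$ to be the two smallest angles in $[0,2\pi)$ of points of $X$. These are neighbouring points, but not necessarily a closest pair. It then proves $X=M(\theta,\phi)$ before knowing anything about the shape of $M(\theta,\phi)$. It takes the smallest angle $\psi$ of a point outside $M(\theta,\phi)$ and writes $\psi-\theta=m(\phi-\theta)+r$ with $0<r<\phi-\theta$. A reflection then produces the point at angle $\theta+2r$. A three-case analysis ($2r<\phi-\theta$, $2r=\phi-\theta$, $2r>\phi-\theta$) yields a point contradicting the minimality of $\phi$ or of $\psi$. Only at the end is Lemma~\ref{lemma_Nakamura_Naoki} invoked.

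\textbf{Your route.} You make the extremal choice of a globally closest pair and apply Lemma~\ref{lemma_Nakamura_Naoki} first. Once $M(\alpha,\beta)$ is known to be a regular $q$-gon inside $X$, minimality forces the spacing $\beta-\alpha=2\pi/q$. Any further point would then lie within $\pi/q<2\pi/q$ of a vertex. So after Lemma~\ref{lemma_M(a,b)_subquandle} the argument is purely metric, and the paper's Euclidean-algorithm case analysis disappears.

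\textbf{Comparison.} The paper's descent is self-contained and shows closure through explicit reflections. It also shows that the two angularly first points already generate $X$. Your version is shorter and uses the quandle operation only through Lemma~\ref{lemma_M(a,b)_subquandle}.

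\textbf{Two small points.} First, in Step 3 say explicitly that $q\ge 2$, since the chosen points are distinct. This guarantees adjacent vertices are at angular distance exactly $2\pi/q$. Also say that $\beta-\alpha$ is a \emph{positive} multiple of $2\pi/q$, which gives the lower bound $\beta-\alpha\ge 2\pi/q$. Second, your closing aside about applying Lemma~\ref{lemma_M(a,b)_subquandle} to the pair $\alpha,\gamma$ is unnecessary, and as stated it is not right. For $q=4$ and $\gamma=\alpha+2\pi/3$, the set $M(\alpha,\gamma)$ has spacing $2\pi/3>\pi/2$, so it contains no closer pair. The closer pair is simply $\gamma$ and its nearest vertex, exactly as in your main Step 4, which does not depend on the aside.
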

\begin{proof}
Each element of $X$ can be written uniquely in the form $(\cos \alpha,\sin \alpha)$ for some $\alpha\in[0,2\pi)$. If $\#X=1$, writing \(X=\{(\cos\alpha,\sin\alpha)\}\), we have $X=C_{\alpha}(\mathcal{R}_1)$. Hence it remains to consider the case $\#X\ge 2$.

Define $\theta\in[0,2\pi)$ by
\[
\theta
:=
\min\left\{
\alpha\in[0,2\pi)
\;\middle|\;
(\cos\alpha,\sin\alpha)\in X
\right\}.
\]
Since $\#X\ge 2$, we may further define $\phi\in(0,2\pi)$ by
\[
\phi
:=
\min\left(
\left\{
\alpha\in[0,2\pi)
\;\middle|\;
(\cos\alpha,\sin\alpha)\in X
\right\}\setminus\{\theta\}
\right).
\]
We shall prove by contradiction that $X=M(\theta,\phi)$. By Lemma~\ref{lemma_M(a,b)_subquandle}, it is enough to assume that \(X\setminus M(\theta,\phi)\neq\emptyset\) and derive a contradiction.

Thus we may define $\psi\in[0,2\pi)$ by
\[
\psi
:=
\min\left\{
\alpha\in[0,2\pi)
\;\middle|\;
(\cos\alpha,\sin\alpha)\in X\setminus M(\theta,\phi)
\right\}.
\]
Since $\psi>\phi>\theta$, there exist a positive integer $m\in\mathbb{Z}$ and a real number $r$ with $0<r<\phi-\theta$ such that\(\psi-\theta=m(\phi-\theta)+r\). Then
\begin{eqnarray*}
&&(\cos(2r+\theta),\sin(2r+\theta))\\
&=&
(\cos(2\psi-2m(\phi-\theta)-\theta),\sin(2\psi-2m(\phi-\theta)-\theta))\\
&=&
(\cos(2m(\phi-\theta)+\theta),\sin(2m(\phi-\theta)+\theta))
\triangleright
(\cos\psi,\sin\psi)
\in X.
\end{eqnarray*}
Note also that \(0<2r<2(\phi-\theta)\).

\noindent\textbf{Case 1: $0<2r<\phi-\theta$.}
In this case, \(\theta<2r+\theta<\phi\), and \((\cos(2r+\theta),\sin(2r+\theta))\in X\). This contradicts the definition of $\phi$.

\medskip
\noindent\textbf{Case 2: $\phi-\theta=2r$.}
Then $\psi-\phi+\theta\in(\theta,\psi)$. Moreover,
\[
\psi-\phi+\theta
=
\psi-2r
=
\psi-2(\psi-\theta-m\phi+m\theta)
=
2(m(\phi-\theta)+\theta)-\psi.
\]
Hence, by \eqref{prop_operator_S^1spherical_quandle},
\[
(\cos(\psi-\phi+\theta),\sin(\psi-\phi+\theta))
=
(\cos\psi,\sin\psi)\triangleright
(\cos(m(\phi-\theta)+\theta),\sin(m(\phi-\theta)+\theta))
\in X.
\]

We next claim that
\[
(\cos(\psi-\phi+\theta),\sin(\psi-\phi+\theta))\notin M(\theta,\phi).
\]
Indeed, suppose otherwise. Then there exists an integer $l$ such that
\[
(\cos(\psi-\phi+\theta),\sin(\psi-\phi+\theta))
=
(\cos(l\phi-(l-1)\theta),\sin(l\phi-(l-1)\theta)).
\]
Therefore, \((\psi-\phi+\theta)-(l\phi-(l-1)\theta)\in2\pi\mathbb{Z}\). Since
\[
(\psi-\phi+\theta)-(l\phi-(l-1)\theta)
=
\psi-((l+1)(\phi-\theta)+\theta),
\]
it follows that
\[
(\cos\psi,\sin\psi)
=
(\cos((l+1)(\phi-\theta)+\theta),\sin((l+1)(\phi-\theta)+\theta))
\in M(\theta,\phi),
\]
contradicting the definition of $\psi$.

Thus we have \(\theta<\psi-\phi+\theta<\psi\) and \((\cos(\psi-\phi+\theta),\sin(\psi-\phi+\theta))\in X\setminus M(\theta,\phi)\), which contradicts the definition of $\psi$.

\medskip
\noindent\textbf{Case 3: $\phi-\theta<2r<2(\phi-\theta)$.}
In this case, \(\theta<2\phi-2r-\theta<\phi\), and by \eqref{prop_operator_S^1spherical_quandle},
\[
(\cos(2\phi-2r-\theta),\sin(2\phi-2r-\theta))
=
(\cos(2r+\theta),\sin(2r+\theta))
\triangleright
(\cos\phi,\sin\phi)
\in X.
\]
This again contradicts the definition of $\phi$.

Therefore, in every case we obtain a contradiction. Hence $X=M(\theta,\phi)$. Since $X$ is finite, Lemma~\ref{lemma_Nakamura_Naoki} implies that \(X=C_{\theta}(\mathcal{R}_n).\)
\end{proof}

\begin{rem}
\label{rem_antipodaipoint_subquandle_S^1}
The quandle $\mathcal{R}_n$ has the following property:
\begin{itemize}
    \item if $n$ is even, then $\mathcal{R}_n$ is closed under taking antipodes;
    \item if $n$ is odd, then no antipode of a point of $\mathcal{R}_n$ belongs to $\mathcal{R}_n$.
\end{itemize}
By Proposition~\ref{prop_Ozdemir-Azcan}, every finite subquandle of $S^{1}_{\mathbb{R}}$ also satisfies the above property.
\end{rem}

\section{Root systems as quandles}
\label{section_Root_systems_as_quandles}
In this section, after recalling the definition of a root system, we show that any root system consisting of unit vectors forms a subquandle of a spherical quandle. For root systems, we refer the reader to \cite{humphreys1992reflection}.

Let \(n\) be a positive integer, and consider the Euclidean space \(\mathbb{R}^n\). Let \(\langle -,-\rangle : \mathbb{R}^n \times \mathbb{R}^n \to \mathbb{R}\) 
denote the Euclidean inner product. For \(\alpha \in \mathbb{R}^n \setminus \{0\}\), define the hyperplane orthogonal to \(\alpha\) by 
\(
H_\alpha
=
\left\{
v \in \mathbb{R}^n \;\middle|\;
\langle v,\alpha\rangle = 0
\right\}.
\) Then the reflection with reflecting hyperplane \(H_\alpha\) is defined to be the linear transformation \(\sigma_\alpha : \mathbb{R}^n \to \mathbb{R}^n\) given by
\[
\sigma_\alpha(v)
:=
v-\frac{2\langle v,\alpha\rangle}{\langle \alpha,\alpha\rangle}\alpha
\qquad
(v\in \mathbb{R}^n).
\]
This reflection is an involution and preserves the Euclidean inner product.
\begin{definition}
\label{definition_root_system}
A finite subset \(X \subset \mathbb{R}^n \setminus \{0\}\) is called a \emph{root system} if it satisfies the following conditions:
\begin{enumerate}
    \item For every \(\alpha \in X\), one has \(\mathbb{R}\alpha \cap X = \{\pm \alpha\}\).
    \item For every \(\alpha \in X\), one has \(\sigma_\alpha(X)=X\).
\end{enumerate}
\end{definition}

\begin{proposition}[Root systems as subquandles of the spherical quandle]
\label{prop_suqunadle_root_system}
Let \(X\) be a root system in the Euclidean space \(\mathbb{R}^{n+1}\), and assume that \(X\) is a subset of the unit sphere \(S^n\). Then \(X\) is a finite subquandle of the spherical quandle \(S^n_{\mathbb{R}}\).
\end{proposition}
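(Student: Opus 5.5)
The plan is to verify that the spherical quandle operation restricted to \(X\) is just the root-system reflection, up to a sign. Then closure follows from axiom (2) of Definition~\ref{definition_root_system} together with the antipodal symmetry from axiom (1). Once closure is established, the quandle axioms Q1–Q3 restrict from \(S^n_{\mathbb{R}}\) to \(X\), with one extra check that \(s_y\) restricted to \(X\) is still a bijection of \(X\).

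First I compute the operation. For \(x,y\in X\subset S^n\) we have \(\langle y,y\rangle=1\). Hence
\[
x\triangleright y=2\langle x,y\rangle y-x=-\bigl(x-2\langle x,y\rangle y\bigr)=-\sigma_y(x).
\]
By condition (2), \(\sigma_y(x)\in X\). By condition (1), applied to the root \(\alpha=\sigma_y(x)\), we have \(-\alpha\in\mathbb{R}\alpha\cap X=\{\pm\alpha\}\), so \(-\alpha\in X\); equivalently, \(X=-X\). Therefore \(x\triangleright y\in X\), and \(\triangleright\) restricts to a binary operation on \(X\).

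Next I check the axioms. Q1 and Q3 are identities holding for all elements of \(S^n\), so they hold on \(X\) automatically. For Q2, the restriction of \(s_y\) to \(X\) is the map \(x\mapsto-\sigma_y(x)\), which is the composite of two bijections \(X\to X\): the map \(\sigma_y\), which satisfies \(\sigma_y(X)=X\) by (2), and the antipodal map, which satisfies \(-X=X\) as shown above. Alternatively, \(s_y\) is an involution on \(S^n\), since \(s_y\circ s_y=(-\sigma_y)^2=\mathrm{id}\), and an involution mapping \(X\) into itself is a bijection of \(X\). Finiteness of \(X\) is part of the definition of a root system, so \(X\) is a finite subquandle of \(S^n_{\mathbb{R}}\).

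There is no serious obstacle. The only point needing care is noticing that the quandle operation is \(-\sigma_y\) rather than \(\sigma_y\), so that the antipodal closure \(X=-X\) must be extracted from condition (1) of Definition~\ref{definition_root_system}.
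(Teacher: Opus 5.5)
Your proof is correct and takes the same approach as the paper, which also notes that \(s_\alpha=-\sigma_\alpha\) on \(S^n\) and appeals to the two root-system axioms. Your version adds the steps the paper leaves implicit: condition (1) gives \(X=-X\), and \(s_y\) restricts to a bijection of \(X\).
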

\begin{proof}
For each \(\alpha \in X\), note that the inner automorphism \(s_\alpha\) of the spherical quandle can be written as \(-\sigma_\alpha\) in terms of the corresponding reflection. The assertion then follows immediately from the definition of a root system and the basic properties of reflections.
\end{proof}

\section{Classification of finite subquandles of spherical quandles}
\label{section_Classification_of_finite_subquandles_of_spherical_quandles}
In the preceding sections, we have considered the dihedral quandle \(\mathcal{R}_n\) and the quandles arising from root systems as examples of finite subquandles of spherical quandles. In this section, we show that, up to isomorphism, these exhaust all finite subquandles of spherical quandles. This proves Theorem~\ref{main_theorem}.

Let us first consider the cases in which the cardinality is very small.
\begin{proposition}[The cases \(\#X=1\) and \(\#X=2\)]
\label{prop_caseX=1or2}
Let \(X\) be a finite subquandle of the spherical quandle \(S^n_{\mathbb{R}}\). If \(\#X=1\), then \(X\) is isomorphic to the trivial quandle \(R_1\). Moreover, \(\#X=2\) if and only if there exists \(x\in S^n_{\mathbb R}\) such that \(X=\{\pm x\}\).
\end{proposition}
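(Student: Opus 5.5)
The plan is to settle the case \(\#X=1\) directly, and then prove the two directions of the equivalence for \(\#X=2\) separately. Throughout, the only facts needed are the formula \(x\triangleright y=2\langle x,y\rangle y-x\) and the observation that \(s_y\) is an involution.

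\textbf{The case \(\#X=1\).} Write \(X=\{x\}\). Then the only product is \(x\triangleright x=2\langle x,x\rangle x-x=x\), since \(\langle x,x\rangle=1\). Hence the operation is trivial and \(X\cong R_1\).

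\textbf{The direction ``if''.} Let \(X=\{x,-x\}\) with \(x\in S^n\). Since \(x\neq -x\), we have \(\#X=2\). It remains to check that \(X\) is closed under the operation:
\[
x\triangleright(-x)=2\langle x,-x\rangle(-x)-x=2x-x=x,
\]
and similarly \((-x)\triangleright x=-x\). So \(X\) is a subquandle; in fact it is the trivial quandle on two elements. Axioms Q1--Q3 are inherited from \(S^n_{\mathbb R}\), because each \(s_y\) restricts to a bijection of a finite set it maps into itself.

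\textbf{The direction ``only if''.} This is the substantive direction. Suppose \(X=\{x,y\}\) with \(x\neq y\). Since \(s_y\) is a bijection of \(X\) fixing \(y\), it must also fix \(x\). This gives
\[
2\langle x,y\rangle y-x=x,
\]
that is, \(\langle x,y\rangle y=x\). Taking norms yields \(|\langle x,y\rangle|=1\), so \(x=\pm y\). Since \(x\neq y\), we conclude \(x=-y\), and hence \(X=\{\pm x\}\).

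\textbf{Main obstacle.} The only point needing care is the bijection step: the argument uses that \(s_y|_X\) is injective, not merely that \(X\) is closed under \(\triangleright\). This is legitimate, because \(s_y\) is injective on all of \(S^n\) (it is an involution, \(s_y\circ s_y=\mathrm{id}\)). Its restriction to \(X\) is therefore injective, and since \(s_y(y)=y\), we must have \(s_y(x)=x\).
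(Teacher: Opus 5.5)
Your proposal is correct and follows essentially the same route as the paper. The paper also notes that a two-element quandle must be trivial, which is your bijection argument for \(s_y|_X\), and derives \(x=\langle x,y\rangle y\). It then uses the pair of relations \(x=\langle x,y\rangle y\), \(y=\langle x,y\rangle x\) where you use one relation plus a norm computation, and checks the converse directly.
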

\begin{proof}
If \(\#X=1\), then the assertion is immediate, since there is only one quandle structure on a singleton.

Now suppose that \(\#X=2\). Write \(X=\{x,y\}\). In general, any quandle of cardinality \(2\) is necessarily trivial. Since \(x\triangleright y=x\), we obtain \(x=\langle x,y\rangle y.\) Similarly, since \(y\triangleright x=y\), we obtain \(y=\langle x,y\rangle x.\) As \(x\neq y\), it follows that necessarily \(y=-x\) and \(\langle x,y\rangle=-1\). Conversely, if \(X=\{\pm x\}\), then one checks directly that \(X\) is indeed a subquandle of \(S^n_{\mathbb{R}}\).
\end{proof}

By Proposition~\ref{prop_caseX=1or2}, it suffices to consider the case where the cardinality is at least \(3\).

We introduce some notation. Let \(n\) and \(k\) be positive integers with \(n \geq k\). For an \(n\)-dimensional real vector space \(V\), let \(\operatorname{Gr}_k(V)\) denote the set of all \(k\)-dimensional linear subspaces of \(V\).

\begin{proposition}
\label{prop_subspaceV_cap_subquandle}
Let \(X\) be a subquandle of the spherical quandle \(S^n_{\mathbb{R}}\). Let \(k\) be an integer with \(2 \leq k \leq n+1\), and let \(V \in \operatorname{Gr}_k(\mathbb{R}^{n+1})\). Then \(X \cap V\) is a subquandle of \(S^n_{\mathbb{R}}\). In particular, \(S^n \cap V\) is a subquandle of \(S^n_{\mathbb{R}}\) isomorphic to \(S^{k-1}_{\mathbb{R}}\), and \(X \cap V\) is a subquandle of \(S^n_{\mathbb{R}} \cap V\).
\end{proposition}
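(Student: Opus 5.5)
The plan is to verify closure directly from the formula \(x\triangleright y=2\langle x,y\rangle y-x\). A subset \(Y\) of a quandle is a subquandle as soon as it is closed under \(\triangleright\) and each restricted map \(s_y|_Y\) is a bijection of \(Y\). For the spherical quandle, \(s_y\) is the restriction of the linear map \(-\sigma_y\), which is an involution. So once \(Y\) is closed under every \(s_y\) with \(y\in Y\), the restriction \(s_y|_Y\) is its own inverse and hence bijective. Condition (Q1) and the self-distributivity (Q3) are inherited from \(S^n_{\mathbb{R}}\) automatically.

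First, take \(x,y\in X\cap V\). Since \(X\) is a subquandle, \(x\triangleright y\in X\). Since \(V\) is a linear subspace, \(2\langle x,y\rangle y-x\in V\) as a linear combination of \(x,y\in V\). Hence \(x\triangleright y\in X\cap V\), and by the involution remark above, \(X\cap V\) is a subquandle. One should also dispose of emptiness: the definition of a quandle requires a nonempty set, so the statement is to be read for \(X\cap V\neq\emptyset\), or the empty set is allowed by convention. I would simply note this.

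Applying the same argument with \(X=S^n\) shows that \(S^n\cap V\) is a subquandle. For the isomorphism with \(S^{k-1}_{\mathbb{R}}\), choose an orthonormal basis \(e_1,\dots,e_k\) of \(V\). Let \(\Phi\colon\mathbb{R}^k\to V\) be the linear isometry \(\Phi(t_1,\dots,t_k)=\sum t_ie_i\). Then \(\Phi\) maps \(S^{k-1}\) bijectively onto \(S^n\cap V\) and preserves inner products. Since it is linear, it follows that \(\Phi(2\langle a,b\rangle b-a)=2\langle\Phi a,\Phi b\rangle\Phi b-\Phi a\), so \(\Phi\) is a quandle isomorphism. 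Finally, \(X\cap V\subset S^n\cap V\), and both carry the operation restricted from \(S^n_{\mathbb{R}}\). Therefore \(X\cap V\) is a subquandle of \(S^n_{\mathbb{R}}\cap V\).

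No step is a genuine obstacle. The only points needing care are the bijectivity of the restricted \(s_y\), which the involution property handles, and the nonemptiness convention.
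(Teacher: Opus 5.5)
Your proposal is correct and follows the paper's proof. The paper gets closure of \(X\cap V\) because \(x\triangleright y=2\langle x,y\rangle y-x\) is a linear combination of \(x,y\in V\), and it gets \(S^{k-1}_{\mathbb{R}}\cong S^n_{\mathbb{R}}\cap V\) from the linear isometry induced by an orthonormal basis of \(V\). Your extra remarks are valid details the paper leaves implicit: bijectivity of the restricted \(s_y\) follows because \(s_y=-\sigma_y\) is an involution, and \(X\cap V\) may be empty, so the statement needs a nonemptiness caveat.
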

\begin{proof}
Let \(x,y \in X \cap V\), and let \(\triangleright\) denote the quandle operation on \(S^n_{\mathbb{R}}\). Since \(X\) is a subquandle of \(S^n_{\mathbb{R}}\), we have \(x \triangleright y \in X\). On the other hand, \(x \triangleright y = 2\langle x,y\rangle y - x\) is a linear combination of \(x\) and \(y\), and hence belongs to \(V\). Therefore \(x \triangleright y \in X \cap V\), which shows that \(X \cap V\) is a subquandle of \(S^n_{\mathbb{R}}\).

Now consider the case \(X = S^n_{\mathbb{R}}\). Then \(S^n_{\mathbb{R}} \cap V\) is also a subquandle of \(S^n_{\mathbb{R}}\). The Euclidean inner product on \(\mathbb{R}^{n+1}\) induces an inner product on \(V\). Choose an orthonormal basis \(v_1,\dots,v_k\) of \(V\) with respect to this inner product, and let \(e_1,\dots,e_k\) be the standard orthonormal basis of the Euclidean space \(\mathbb{R}^k\). Consider the linear isomorphism \(i_k^n : \mathbb{R}^k \to V\) induced by the correspondence \(e_i \mapsto v_i\) for each \(i \in \{1,\dots,k\}\). Then \(i_k^n\) preserves the inner product. It follows that \(i_k^n\) induces a quandle isomorphism \(S^{k-1}_{\mathbb{R}} \cong S^n_{\mathbb{R}} \cap V\).
\end{proof}
\begin{rem}
\label{rem_antipodalpoint_subquandle_S^n}
Consider the case \(k=2\) in Proposition~\ref{prop_subspaceV_cap_subquandle}. Then \(X \cap V\) can be identified with a subquandle of \(S^1_{\mathbb{R}}\). In this case, by the choice of \(i_k^n\) made in the proof of Proposition~\ref{prop_subspaceV_cap_subquandle}, the result of Proposition~\ref{prop_Ozdemir-Azcan} and the property of antipodal points described in Remark~\ref{rem_antipodaipoint_subquandle_S^1} carry over. More precisely, \(X \cap V\) is isomorphic to \(\mathcal{R}_{\#(X \cap V)}\), and the following hold:
\begin{itemize}
    \item \(\#(X \cap V)\) is even if and only if the antipode of each point of \(X \cap V\) also belongs to \(X \cap V\);
    \item \(\#(X \cap V)\) is odd if and only if the antipode of each point of \(X \cap V\) does not belong to \(X \cap V\).
\end{itemize}
\end{rem}

\begin{proposition}
\label{prop_sum_subquandle_elements=0}
Let \(X\) be a finite subquandle of the spherical quandle \(S^n_{\mathbb{R}}\). Define \(\Sigma_X:=\sum_{y\in X} y \in \mathbb{R}^{n+1}\). If \(\#X \geq 2\), then \(\Sigma_X=0\).
\end{proposition}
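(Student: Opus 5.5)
Since \(X\) is a subquandle, every inner automorphism \(s_y\) with \(y\in X\) restricts to a bijection \(X\to X\). Summing \(x\triangleright y\) over \(x\in X\) therefore just permutes the summands, so
\[
\Sigma_X=\sum_{x\in X} x\triangleright y=\sum_{x\in X}\bigl(2\langle x,y\rangle y-x\bigr)=2\langle \Sigma_X,y\rangle y-\Sigma_X .
\]
This gives the key identity
\[
\Sigma_X=\langle \Sigma_X,y\rangle\, y \qquad (y\in X).
\]
In words, \(\Sigma_X\) lies on the line \(\mathbb{R}y\) for every \(y\in X\).

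Now suppose \(\Sigma_X\neq 0\). Then every \(y\in X\) lies on the single line \(\mathbb{R}\Sigma_X\), and since \(\|y\|=1\) this forces \(X\subset\{\pm \Sigma_X/\|\Sigma_X\|\}\). Because \(\#X\ge 2\), we must have \(X=\{u,-u\}\) with \(u=\Sigma_X/\|\Sigma_X\|\). But then \(\Sigma_X=u+(-u)=0\), a contradiction. Hence \(\Sigma_X=0\).

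There is no real obstacle here. The only points to check are that \(s_y\) maps \(X\) bijectively onto \(X\), which follows from \(X\) being a finite subquandle (so \(s_y|_X\) is an injective self-map of a finite set), and the bookkeeping of the two-element case, which is consistent with Proposition~\ref{prop_caseX=1or2}.
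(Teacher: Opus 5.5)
Your proof is correct and follows the paper's argument: invariance of \(\Sigma_X\) under each \(s_y\) gives \(\Sigma_X=\langle \Sigma_X,y\rangle y\in\mathbb{R}y\) for every \(y\in X\). The only difference is in the final step. The paper treats \(\#X=2\) separately via Proposition~\ref{prop_caseX=1or2} and, for \(\#X\ge 3\), picks \(z\in X\setminus\{\pm x\}\) so that \(\Sigma_X\) lies on two independent lines, while your contradiction argument covers both cases at once.
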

\begin{proof}
By Proposition~\ref{prop_caseX=1or2}, the assertion holds when \(\#X=2\). We therefore consider the case \(\#X \geq 3\). Choose \(x \in X\). Since \(s_x(X)=X\), we have \(s_x(\Sigma_X)=\Sigma_X\). On the other hand,
\[
s_x(\Sigma_X)
=\sum_{y\in X} s_x(y)
=\sum_{y\in X} \bigl(2\langle x,y\rangle x-y\bigr)
=2\sum_{y\in X} \langle x,y\rangle x-\Sigma_X.
\]
Hence \(\Sigma_X=\sum_{y\in X} \langle x,y\rangle x \in \mathbb{R}x\). Since we are assuming \(\#X \geq 3\), we can choose \(z \in X\setminus\{\pm x\}\). By the same argument, we obtain \(\Sigma_X=\sum_{y\in X} \langle z,y\rangle z \in \mathbb{R}z\). Because \(x\) and \(z\) are linearly independent in \(\mathbb{R}^{n+1}\), it follows that \(\Sigma_X=0\).
\end{proof}

Although the following proposition is proved in \cite[Proposition 8]{Azcan2010finite}, we include a proof for the convenience of the reader and for completeness, since this result plays an important role in what follows.
\begin{proposition}[\cite{Azcan2010finite}]
\label{prop_antipodalpoint_subquandle}
Let \(X\) be a finite subquandle of the spherical quandle \(S^n_{\mathbb{R}}\). If \(-x \in X\) for some \(x \in X\), then \(X\) contains the antipode \(-y\) of every element \(y \in X\).
\end{proposition}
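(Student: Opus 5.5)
Suppose \(x,-x\in X\) and let \(y\in X\) be arbitrary. The plan is to show \(-y\in X\) by working inside the plane spanned by \(x\) and \(y\), and then to use the rigidity of finite subquandles of \(S^1_{\mathbb{R}}\) recorded in Remark~\ref{rem_antipodalpoint_subquandle_S^n}.

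If \(y=\pm x\), there is nothing to prove. Otherwise \(x\) and \(y\) are linearly independent, since both are unit vectors and \(y\neq\pm x\). Set \(V:=\mathbb{R}x+\mathbb{R}y\in\operatorname{Gr}_2(\mathbb{R}^{n+1})\). By Proposition~\ref{prop_subspaceV_cap_subquandle}, \(X\cap V\) is a subquandle of \(S^n_{\mathbb{R}}\cap V\cong S^1_{\mathbb{R}}\). It is finite because \(X\) is finite, and it contains \(x\), \(-x\) and \(y\).

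By Proposition~\ref{prop_Ozdemir-Azcan}, transported through the isometry \(i_2^n\) as in Remark~\ref{rem_antipodalpoint_subquandle_S^n}, \(X\cap V\) is a rotated copy of \(\mathcal{R}_m\) with \(m=\#(X\cap V)\). By Remark~\ref{rem_antipodaipoint_subquandle_S^1}, a set of this form is either closed under antipodes (when \(m\) is even) or contains no antipodal pair at all (when \(m\) is odd). Since \(X\cap V\) contains the antipodal pair \(\{x,-x\}\), the odd case is excluded. Hence \(X\cap V\) is closed under antipodes, and in particular \(-y\in X\cap V\subset X\).

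The only point that needs care is that the isometric identification \(V\cap S^n\cong S^1\) carries antipodes to antipodes. This holds because \(i_2^n\) is linear, so it commutes with \(v\mapsto -v\). Everything else is a direct invocation of earlier results.

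As a purely computational alternative, one could note that \(s_x(y)=2\langle x,y\rangle x-y\) and \(s_{-x}\) coincide, so the antipodal pair gives no new reflection. Direct computation is therefore awkward, and the reduction to the two-dimensional classification is the cleanest route.
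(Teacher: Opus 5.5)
Your proposal is correct and follows essentially the same route as the paper: restrict to the plane spanned by \(x\) and \(y\), regard \(X\cap\Pi\) as a finite subquandle of \(S^1_{\mathbb{R}}\), and use the \"{O}zdemir--Azcan classification together with the parity dichotomy of Remark~\ref{rem_antipodalpoint_subquandle_S^n}, so that the antipodal pair \(\pm x\) forces \(-y\in X\). Your explicit handling of the case \(y=\pm x\), and your check that the linear isometry \(i_2^n\) and the rotation \(C_\theta\) commute with \(v\mapsto -v\), spell out points the paper leaves implicit.
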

\begin{proof}
By Proposition~\ref{prop_caseX=1or2}, the assertion holds when \(\#X=1\) or \(2\). We therefore consider the case \(\#X \geq 3\). Let \(x \in X\) be an element such that \(-x \in X\). If \(y \in X \setminus \{\pm x\}\), then \(x\) and \(y\) are linearly independent in \(\mathbb{R}^{n+1}\). Let \(\Pi\) be the \(2\)-dimensional linear subspace of \(\mathbb{R}^{n+1}\) spanned by \(x\) and \(y\). Note that \(\pm x, y \in \Pi\). By Remark~\ref{rem_antipodalpoint_subquandle_S^n}, the antipode of every element of \(X \cap \Pi\) belongs to \(X \cap \Pi\).
\end{proof}
\begin{proposition}
\label{prop_main1_root_system}
Let \(X\) be a finite subquandle of the spherical quandle \(S^n_{\mathbb{R}}\). If \(-x \in X\) for some \(x \in X\), then \(X\) is a root system in \(\mathbb{R}^{n+1}\).
\end{proposition}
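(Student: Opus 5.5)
We verify the two conditions of Definition~\ref{definition_root_system} for \(X\), which is finite and contained in \(S^n\subset\mathbb{R}^{n+1}\setminus\{0\}\). First, by Proposition~\ref{prop_antipodalpoint_subquandle}, the hypothesis that some antipodal pair lies in \(X\) gives \(-X=X\). This antipodal closure is the only input needed beyond the defining identity of the spherical quandle operation.

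For condition (1), take \(\alpha\in X\). Every element of \(X\) is a unit vector, so any \(\beta\in\mathbb{R}\alpha\cap X\) satisfies \(\beta=t\alpha\) with \(|t|=1\). Hence \(\mathbb{R}\alpha\cap X\subset\{\pm\alpha\}\). Conversely, \(\alpha\in X\) trivially and \(-\alpha\in X\) by antipodal closure, so \(\mathbb{R}\alpha\cap X=\{\pm\alpha\}\).

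For condition (2), take \(\alpha\in X\). Since \(\langle\alpha,\alpha\rangle=1\), the reflection is \(\sigma_\alpha(v)=v-2\langle v,\alpha\rangle\alpha\), so for \(v\in S^n\) we have \(\sigma_\alpha(v)=-(2\langle v,\alpha\rangle\alpha-v)=-(v\triangleright\alpha)=-s_\alpha(v)\). This is the same identity used in the proof of Proposition~\ref{prop_suqunadle_root_system}. Because \(X\) is a subquandle, \(s_\alpha(X)\subset X\). Since \(s_\alpha\) is injective and \(X\) is finite, in fact \(s_\alpha(X)=X\). Then \(\sigma_\alpha(X)=-s_\alpha(X)=-X=X\), using antipodal closure once more.

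No step is a serious obstacle; the real content is Proposition~\ref{prop_antipodalpoint_subquandle}, which is already available. The only point requiring care is to make sure that antipodal closure is genuinely needed in both conditions, namely for \(-\alpha\in X\) in (1) and for converting \(s_\alpha\)-invariance into \(\sigma_\alpha\)-invariance in (2). Everything else is the elementary identity \(\sigma_\alpha=-s_\alpha\) on unit vectors.
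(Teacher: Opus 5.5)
Your proof is correct and follows essentially the same route as the paper: antipodal closure from Proposition~\ref{prop_antipodalpoint_subquandle}, the unit-length observation for condition (1), and the identity \(\sigma_\alpha=-s_\alpha\) to turn \(s_\alpha\)-invariance into \(\sigma_\alpha\)-invariance for condition (2). Your deduction of \(s_\alpha(X)=X\) from injectivity and finiteness spells out what the paper simply takes from the subquandle property.
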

\begin{proof}
By Proposition~\ref{prop_antipodalpoint_subquandle}, we have \(-x \in X\) for every \(x \in X\). Since \(X \cap \mathbb{R}x \subset S^n \cap \mathbb{R}x = \{\pm x\}\), it follows that \(X \cap \mathbb{R}x = \{\pm x\}\). Hence \(-X = X\). Since \(X\) is a subquandle of \(S^n_{\mathbb{R}}\), we have \(s_x(X)=X\). As \(s_x=-\sigma_x\), it follows that \(\sigma_x(X)=-s_x(X)=-X=X\). Therefore, \(X\) satisfies the conditions in Definition~\ref{definition_root_system}, and hence is a root system.
\end{proof}

\begin{proposition}
\label{prop_main2_dihedral_quandle}
Let \(X\) be a finite subquandle of the spherical quandle \(S^n_{\mathbb{R}}\). If \(-x \notin X\) for every \(x \in X\), then \(\#X\) is odd and \(X\) is isomorphic to the dihedral quandle \(R_{\#X}\).
\end{proposition}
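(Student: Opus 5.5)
The plan is to show that a finite subquandle \(X\) with no antipodal pairs lies in a single \(2\)-dimensional linear subspace \(\Pi\). Proposition~\ref{prop_subspaceV_cap_subquandle} and Remark~\ref{rem_antipodalpoint_subquandle_S^n} then finish the proof. Indeed, \(X=X\cap\Pi\) is isomorphic to \(\mathcal{R}_{\#X}\cong R_{\#X}\) by Proposition~\ref{prop_Ozdemir-Azcan}. Moreover, \(\#X\) must be odd, since for even \(\#X\) the set \(\mathcal{R}_{\#X}\) is closed under antipodes, contrary to the hypothesis. The cases \(\#X=1,2\) are covered by Proposition~\ref{prop_caseX=1or2}: \(\#X=2\) is impossible, since it forces \(X=\{\pm x\}\). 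So assume \(\#X\ge 3\).

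\textbf{First observations.}
\begin{enumerate}
\item No two elements of \(X\) are orthogonal. If \(\langle x,z\rangle=0\) with \(x,z\in X\), then \(z\triangleright x=-z\in X\), a contradiction.
\item For any distinct \(x,y\in X\), the plane \(\Pi=\operatorname{span}(x,y)\) meets \(X\) in a regular polygon \(P=C_\theta(\mathcal{R}_m)\) with \(m\) odd and \(m\ge3\), by Remark~\ref{rem_antipodalpoint_subquandle_S^n}.
\end{enumerate}
Among all such planes, I would fix one for which \(m=M\) is maximal. Write \(r\) for the rotation of \(\Pi\) by \(2\pi/M\), extended by the identity on \(\Pi^\perp\). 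For \(x,x'\in P\), the map \(s_x\) acts on \(\Pi\) as the reflection in \(\mathbb{R}x\) and on \(\Pi^\perp\) as \(-1\). Hence \(s_{x'}\circ s_x\) is the identity on \(\Pi^\perp\) and a rotation on \(\Pi\). As in Proposition~\ref{prop_lotation_auto_S^1_R}, this shows that \(r\) lies in the subgroup generated by the \(s_x\) with \(x\in P\), so \(r\) preserves \(X\).

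\textbf{Ruling out points off \(\Pi\).} Suppose \(z\in X\setminus\Pi\), and write \(z=z_\Pi+z_\perp\) with \(z_\perp\neq0\).
\begin{enumerate}
\item The elements \(r^k z\) (\(0\le k<M\)) lie in \(X\) and have \(\Pi^\perp\)-component \(z_\perp\). The elements \(s_x r^k z\) lie in \(X\) and have \(\Pi^\perp\)-component \(-z_\perp\).
\item Consequently \(X\) contains points on both sides of \(\Pi\) whose \(\Pi\)-components run over the orbit of \(z_\Pi\) under the dihedral group \(D_M\).
\item I would then look at the plane spanned by \(z\) and a suitable point \(w\) of this orbit, for instance \(w=s_x z\) with \(x\in P\) chosen to make \(\langle z,w\rangle\) as close to \(0\) as possible. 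Its intersection with \(X\) is again a regular odd polygon.
\item The aim is to show that this polygon has more than \(M\) vertices, contradicting maximality, or else that it contains an orthogonal pair, contradicting the first observation. The case \(z_\Pi=0\) is immediate, since then \(z\perp x\) for every \(x\in P\).
\end{enumerate}

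\textbf{Main obstacle.} Showing that a point off \(\Pi\) forces either an orthogonal pair or a larger polygon is the crux of the proof. The subtlety is that \(D_M\) with \(M\) odd does not contain \(-1\), so one cannot directly produce \(-z\). The first thing I would try is an extremal choice: take the pair \(x,z\in X\) minimizing \(\langle x,z\rangle\), that is, the most nearly antipodal pair. This minimum equals \(-\cos(\pi/m)\) for the polygon through \(x\) and \(z\). I would then check that the mixed points \(s_x r^k z\) realize a strictly smaller inner product with \(z\) whenever \(z_\perp\neq0\).

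A fallback is the identity \(\Sigma_X=0\) of Proposition~\ref{prop_sum_subquandle_elements=0}, applied to \(X\) and to the subquandles \(X\cap V\), combined with averaging over the \(r\)-orbit, \(\sum_k r^k z=Mz_\perp\). The goal there would be to force \(z_\perp=0\).
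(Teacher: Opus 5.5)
Your preliminary steps are correct: \(X\) has no orthogonal pairs, every plane through two points of \(X\) cuts out an odd regular polygon, and \(r\) preserves \(X\). However, the step that actually carries the proposition, namely that no \(z\in X\) lies off \(\Pi\), is only stated as an aim. So the proposal does not yet prove that \(X\) is planar. Two of your concrete suggestions also would not close it. In step~3, choosing \(w=s_xz\) with \(\langle z,w\rangle\) close to \(0\) points the wrong way: a polygon with more than \(M\) vertices needs a very negative inner product, and nothing forces an exactly orthogonal pair. The fallback cannot force \(z_\perp=0\) either, because the symmetries you already have imply \(\Sigma_X=0\) on their own. Indeed, \(r\) kills the \(\Pi\)-component of \(\Sigma_X\), and \(s_x\) (\(x\in P\)) negates its \(\Pi^\perp\)-component. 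Concretely, \(s_x\) maps the \(r\)-orbit of \(z\), whose sum is \(Mz_\perp\), onto a different \(r\)-orbit whose sum is \(-Mz_\perp\).

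Your extremal idea is the one that works, and the missing check is short. For \(y\in P\) one has \(\langle z,s_yz\rangle=2\langle y,z\rangle^2-1\). Since \(M\) is odd, the \(M\) lines \(\mathbb{R}y\) (\(y\in P\)) are spaced \(\pi/M\) apart in \(\Pi\). So if \(z_\Pi\neq0\) (the other case is your first observation), some \(y\) lies within angle \(\pi/(2M)\) of the line in \(\Pi\) orthogonal to \(z_\Pi\). For this \(y\), \(|\langle y,z\rangle|=|\langle y,z_\Pi\rangle|\le|z_\Pi|\sin\tfrac{\pi}{2M}<\sin\tfrac{\pi}{2M}\), where the strict inequality holds because \(z_\perp\neq0\). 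Hence \(\langle z,s_yz\rangle<2\sin^2\tfrac{\pi}{2M}-1=-\cos\tfrac{\pi}{M}\). This gives \(s_yz\neq z\), and \(s_yz\neq-z\) holds by hypothesis. The smallest inner product between two vertices of an odd regular \(m'\)-gon is \(-\cos(\pi/m')\). Therefore the polygon \(X\cap\operatorname{span}(z,s_yz)\) has \(m'>M\) vertices, contradicting maximality. The mixed points \(s_xr^kz\) are exactly these \(s_yz\), so \(r\) is not even needed.

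The paper reaches planarity without any geometry. Fix \(x\in X\). The planes through \(x\) that meet \(X\setminus\{x\}\) partition \(X\setminus\{x\}\); this is where the absence of antipodes is used. Each block sums to \(-x\) by Proposition~\ref{prop_sum_subquandle_elements=0} applied to \(X\cap\Pi\). Then \(0=\Sigma_X=(1-p)x\), with \(p\) the number of such planes, forces a single plane. Your fallback had the right ingredient. What was missing was to apply the vanishing sum to this partition by planes through a fixed point, rather than to \(r\)-orbits.
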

\begin{proof}
By Proposition~\ref{prop_caseX=1or2}, the cases \(\#X=1\) and \(\#X=2\) have already been established.

We now consider the case \(\#X \geq 3\). Fix \(x \in X\), and define the set \(\mathcal{P}_x\) by
\[
\mathcal{P}_{x}
:=
\left\{
\Pi\in\operatorname{Gr}_{2}(\mathbb{R}^{n+1})
\;\middle|\;
x\in\Pi
\text{ and }
(X\cap\Pi)\setminus\{x\}\neq\emptyset
\right\}.
\]
We claim that \(\#\mathcal{P}_x=1\). Since \(x\) and any \(y\in X\setminus\{x\}\) are linearly independent, the \(2\)-dimensional subspace of \(\mathbb{R}^{n+1}\) spanned by \(x\) and \(y\) belongs to \(\mathcal{P}_x\). Thus \(\mathcal{P}_x\neq\emptyset\). Now let \(\Pi_1,\Pi_2\in\mathcal{P}_x\). Then
\[
(X\cap\Pi_1\cap\Pi_2)\setminus\{x\}\neq\emptyset
\Longrightarrow
\Pi_1=\Pi_2.
\]
Indeed, if \(X\cap\Pi_1\cap\Pi_2\) contains some \(y\neq x\), then \(x\) and \(y\) are linearly independent. Hence both \(2\)-dimensional subspaces \(\Pi_1\) and \(\Pi_2\) have the common basis \(x,y\), and therefore coincide. It follows that
\[
X=\{x\}\sqcup\bigsqcup_{\Pi\in\mathcal{P}_x}\bigl((X\cap\Pi)\setminus\{x\}\bigr).
\]
For each \(\Pi\in\mathcal{P}_x\), Proposition~\ref{prop_subspaceV_cap_subquandle} shows that \(X\cap\Pi\) is a finite subquandle of \(S^n_{\mathbb{R}}\). By Proposition~\ref{prop_sum_subquandle_elements=0}, we have
\[
\sum_{y\in (X\cap\Pi)\setminus\{x\}} y=-x.
\]
Therefore,
\[
\sum_{y\in X}y
=
x+\sum_{\Pi\in\mathcal{P}_x}\sum_{y\in (X\cap\Pi)\setminus\{x\}} y
=
x+\sum_{\Pi\in\mathcal{P}_x}(-x)
=
(1-\#\mathcal{P}_x)x.
\]
On the other hand, Proposition~\ref{prop_sum_subquandle_elements=0} again implies that \(\Sigma_X=\sum_{y\in X}y=0\). Hence \(\#\mathcal{P}_x=1\).

If \(\Pi\in\mathcal{P}_x\), then \(X=X\cap\Pi\). By Remark~\ref{rem_antipodalpoint_subquandle_S^n}, it follows that \(X\) is isomorphic to \(\mathcal{R}_{\#X}\), and in particular \(\#X=\#(X\cap\Pi)\) is odd.
\end{proof}
Theorem \ref{main_theorem} follows from Proposition~\ref{prop_main1_root_system} and Proposition~\ref{prop_main2_dihedral_quandle}.

\section*{Acknowledgements}
The first author would like to express gratitude to Professor Hideyuki Ishi. Theorem~\ref{main_theorem} originated from his participation in the ``8th Tunisian-Japanese Conference Geometric and Harmonic Analysis on Homogeneous Spaces and Applications," to which Professor Ishi kindly invited him.

The authors would like to express their sincere congratulations to Professor Hiroyuki Ochiai on the occasion of his 60th birthday. Part of this work was presented at the conference held in his honor.

The authors thank Michiko Yonemura for drawing Fig. \ref{pic_def_quandle_operation}.

This work was partly supported by MEXT Promotion of Distinctive Joint Research Center Program JPMXP0723833165 and Osaka Metropolitan University Strategic Research Promotion Project (Development of International Research Hubs).
\bibliography{bibliography}

@article{Azcan2010finite,
  title={Finite subquandles of sphere},
  author={{\"O}zdemir, N{\"u}lifer and Azcan, H{\"u}seyin},
  journal={Turkish Journal of Mathematics},
  volume={34},
  number={2},
  pages={293--304},
  year={2010}
}

@article{AzcanFenn1994,
  title={Spherical representations of the link quandles},
  author={Azcan, Hiiseyin and Fenn, Roger},
  journal={Turkish J. of Mathematics},
  volume={18},
  pages={102--110},
  year={1994}
}

@article{ChenNagano1988riemannian,
  title={A Riemannian geometric invariant and its applications to a problem of Borel and Serre},
  author={Chen, Bang-Yen and Nagano, Tadashi},
  journal={Transactions of the American Mathematical Society},
  pages={273--297},
  year={1988},
  publisher={JSTOR}
}

@book{humphreys1992reflection,
  title={Reflection groups and Coxeter groups},
  author={Humphreys, James E},
  number={29},
  year={1992},
  publisher={Cambridge university press}
}

@article{IshiharaTamaru2016flat,
  title={Flat connected finite quandles},
  author={Ishihara, Yoshitaka and Tamaru, Hiroshi},
  journal={Proceedings of the American Mathematical Society},
  volume={144},
  number={11},
  pages={4959--4971},
  year={2016}
}

@article{joyce1982classifying,
  title={A classifying invariant of knots, the knot quandle},
  author={Joyce, David},
  journal={Journal of Pure and Applied Algebra},
  volume={23},
  number={1},
  pages={37--65},
  year={1982},
  publisher={Elsevier}
}

@book{kamada2017surface,
  title={Surface-knots in 4-space},
  author={Kamada, Seiichi},
  volume={22},
  year={2017},
  publisher={Springer}
}

@article{takasaki1943abstraction,
  title={Abstraction of symmetric transformations},
  author={Takasaki, Mituhisa},
  journal={Tohoku Mathematical Journal, First Series},
  volume={49},
  pages={145--207},
  year={1943},
  publisher={Mathematical Institute, Tohoku University}
}

@article{matveev1982distributive,
  title={Distributive groupoids in knot theory},
  author={Matveev, Sergei Vladimirovich},
  journal={Mathematics of the USSR-Sbornik},
  volume={47},
  number={1},
  pages={73--83},
  year={1982}
}

@book{nosaka2017books,
  title={Quandles and topological pairs: Symmetry, knots, and cohomology},
  author={Nosaka, Takefumi},
  year={2017},
  publisher={Springer}
}
\bibliographystyle{plain}

\end{document}